\newcommand\T{\rule{0pt}{2.6ex}}
\newcommand\B{\rule[-1.2ex]{0pt}{0pt}}
\newtheorem{theorem}{Theorem}
\newtheorem{lemma}{Lemma}
\newenvironment{proof}{\noindent{\bf Proof:}}{\hspace*{1mm} \hfill$\Box$ \par \bigskip }
\date{}
\title{\bf Computing the Ramsey Number $R(K_5-P_3,K_5)$}
\begin{document}

\thispagestyle{empty}
\maketitle  \footnote[0]{Research supported by the NSF Research Experiences for Undergraduates Program (Award \#0552418) held at the Rochester Institute of Technology during the summer of 2010. The program was cofunded by the Department of Defense.}
\thispagestyle{empty}

\vspace{-18mm}
\begin{center}

\begin{multicols}{2}
{\bf Jesse A. Calvert}\\
 Department of Mathematics\\
 Washington University\\
 St. Louis, MO 63105\\
 {\tt jcalvert@wustl.edu}\\[4mm]

{\bf Michael J. Schuster}\\
 Department of Mathematics\\
 North Carolina State University \\
 Raleigh, NC 27695 \\
 {\tt mjschust@ncsu.edu}\\[4mm]
\end{multicols}

{\bf Stanis\l{}aw P. Radziszowski}\\
 Department of Computer Science \\
 Rochester Institute of Technology \\
 Rochester, NY 14623 \\
 {\tt spr@cs.rit.edu}\\[4mm]

\bigskip
\end{center}

\vspace{-3mm}
\begin{quote}
{\bf Abstract.}  We give a computer-assisted proof of the fact
that $R(K_5-P_3, K_5)=25$. This solves one of the three remaining
open cases in Hendry's table, which listed the Ramsey numbers for
pairs of graphs on 5 vertices. We find that there exist no
$(K_5-P_3,K_5)$-good graphs containing a $K_4$ on 23 or 24 vertices,
where a graph $F$ is $(G,H)$-good if $F$ does not contain $G$ and the
complement of $F$ does not contain $H$.
The unique $(K_5-P_3,K_5)$-good graph
containing a $K_4$ on 22 vertices is presented.
\end{quote}

\section{Introduction}

For simple graphs $G$ and $H$, a $(G,H)$-good graph is a graph $F$ that
contains no subgraph $G$ and whose complement contains no subgraph $H$.
A $(G,H;n)$-good graph is a $(G,H)$-good graph on $n$ vertices. We will
denote the set of all $(G,H)$-good graphs by $\mathcal{R}(G,H)$
and, similarly,
the set of all $(G,H;n)$-good graphs by $\mathcal{R}(G,H;n)$.
The minimum
number of vertices $n$ such that no $(G,H;n)$-good graph exists is
the Ramsey number $R(G,H)$.
The best known bounds for various
types of Ramsey numbers are listed in
the dynamic survey {\em Small Ramsey Numbers} by the third author \cite{Rad}.
For a comprehensive overview of Ramsey numbers and general graph
theory terminology not defined in this paper we recommend
a widely used textbook by West \cite{West}.
$P_k$ denotes a path on $k$ vertices, and $K_5 - P_3$
can be seen either as a $K_5$ with two adjacent edges
removed or a $K_4$ with an additional vertex connected to
two of its vertices.

In 1989, Hendry \cite{Hen} compiled a table of
known values and bounds on Ramsey numbers for
connected graphs $G$ and $H$ on five vertices.
For the Ramsey number $R(K_5-P_3, K_5)$ the Hendry's table
gives the bound $R(K_5-P_3, K_5) \le 28$;
a lower bound of 25 can be obtained from
the result $R(K_4,K_5)=25$ \cite{MR2}. In the 2009 REU
(NSF Research Experiences for Undergraduates Program)
Black, Leven and Radziszowski \cite{Reu}
showed that the upper bound can be reduced
to $R(K_5-P_3, K_5) \le 26$.
The main goal of the 2010 REU was to show that
$R(K_5-P_3, K_5) = 25$ or $R(K_5-P_3, K_5) = 26$,
which was accomplished using a combination of
combinatorial reasoning and computation.
The computations required to show that $R(K_5-P_3, K_5)=25$
were easily completed on a standard desktop computer.
However, the computation of the number
of $(K_5-P_3, K_5)$-good graphs containing $K_4$
on less than $25$ vertices was much longer.
We found that there were no $(K_5-P_3, K_5)$-good
graphs on $24$ or $23$ vertices, exactly one on $22$
vertices, and millions on $21$.

The general question of characterizing graphs $G$, $H$ and
extensions $G'$ of $G$, for which the equality $R(G,H)=R(G',H)$ holds,
is very difficult. Only a few such cases are known,
and some of them are presented in Section 5. Our detailed
study of $(K_5-P_3,K_5)$-good graphs seems to provide
evidence that, at least sometimes, avoiding larger graph
$G'$ may be not much stronger than avoiding $G$.
We expect that many other interesting cases exist
for which $R(G,H)=R(G',H)$.

Section 2 presents two enumerations of smaller graphs needed later in the paper,
the algorithm foundations and computations showing the main results are described
in Sections 3 and 4, respectively, and finally Section 5 points out how our
result relates to a general 1989 theorem by Burr, Erd\H {o}s, Faudree and
Schelp \cite{BEFS}.

\section{Enumerations for $\mathcal{R}(K_5-P_3, K_5)$}

In order to study $\mathcal{R}(K_5-P_3, K_5)$,
it is useful to have enumerations of the sets
$\mathcal{R}(K_4-P_3,K_5)$ and $\mathcal{R}(K_5-P_3,K_4)$.
It is known that $R(K_4-P_3,K_5)=14$ and $R(K_5-P_3,K_4)=18$
\cite{Clan}. We have generated the corresponding sets of graphs
using a simple vertex by vertex extension algorithm, and McKay's
{\em nauty} package \cite{McKay2} to eliminate isomorphs.
The 1092 nonisomorphic graphs in $\mathcal{R}(K_4-P_3,K_5)$
and the 3454499 nonisomorphic graphs in $\mathcal{R}(K_5-P_3,K_4)$
were enumerated. The results agreed with the computations
reported in \cite{Reu}, and the data is summarized in
Tables I and II (two typographical errors in \cite{Reu}
were corrected).
We include the tables here in full since they are needed
to see the context of computations performed to obtain
our results.\\

\vspace{.4in}
\begin{center}
  \begin{tabular}{@{} |r|r r|r r| @{}}
    \hline
$n$ & $|\mathcal{R}(K_5-P_3,K_4;n)|$ & {\#}edges & {\#}graphs with $K_4$ & {\#}edges\\
    \hline
    2 & 2 & 0-1 & 0 & \\
    3 & 4 & 0-3 & 0 & \\ 
    4 & 10 & 1-6 & 1& 6\\ 
    5 & 26 & 2-8 & 2 & 6-7\\ 
    6 & 92 & 3-12 & 8 & 6-12\\ 
    7 & 391 & 5-16 & 29 & 7-12\\ 
    8 & 2228 & 7-21 & 149 & 8-16\\ 
    9 & 15452 & 9-27 & 751 & 10-19\\ 
    10 & 107652 & 12-31 & 3946 & 12-24\\ 
    11 & 557005 & 15-36 & 10649 & 15-28\\ 
    12 & 1455946 & 18-40 & 6780 & 18-32\\ 
    13 & 1184231 & 33-45 & 0 &\\ 
    14 & 130816 & 41-50 & 0 &\\ 
    15 & 640 & 50-55 & 0 &\\ 
    16 & 2 & 60 & 0 &\\ 
    17 & 1 & 68 & 0 &\\ 
    \hline

  \end{tabular}
\end{center}

\begin{center}
{\bf Table I.}
Statistics of $\mathcal{R}(K_5-P_3,K_4)$.  
\end{center}
\vspace{.3in}

The last two columns of Table I give the counts and
the corresponding edge ranges of all $(K_5-P_3, K_4)$-good
graphs which contain $K_4$ as a subgraph, i.e.
of all graphs which are $(K_5-P_3, K_4)$-good
but not $(K_4,K_4)$-good. We will show in Section \ref{sec:thm}
that a similar type of distribution occurs
in $\mathcal{R}(K_5-P_3,K_5)$.

In Table II, the last two columns present counts and
the corresponding edge ranges of all $(K_4-P_3, K_5)$-good
graphs which contain $K_3$ as a subgraph, or equivalently,
those graphs which are $(K_4-P_3, K_5)$-good but not
$(K_3,K_5)$-good.

\vspace{.4in}
\begin{center}
\begin{tabular}{@{} |r|r r| r r|@{}}
    \hline
$n$ & $|\mathcal{R}(K_4-P_3,K_5;n)|$ & {\#}edges & {\#}graphs with $K_3$ & {\#}edges\\
    \hline
    2 & 2 & 0-1 & 0 &\\
    3 & 4 & 0-3 & 1 & 3\\ 
    4 & 8 & 0-4 & 1 & 3\\ 
    5 & 15 & 1-6 & 2 & 3-4\\ 
    6 & 36 & 2-9 & 4 & 3-6\\ 
    7 & 78 & 3-12 & 7 & 4-7\\ 
    8 & 190 & 4-16 & 11 & 5-9\\ 
    9 & 308 & 6-17 & 18 & 6-12\\ 
    10 & 326 & 8-20 & 13 & 8-13\\ 
    11 & 110 & 10-22 & 5 & 10-15 \\ 
    12 & 13 & 12-24 & 1& 12\\ 
    13 & 1 & 26 & 0 & \\ 
    \hline

  \end{tabular}
\end{center}

\begin{center}
{\bf Table II.}
Statistics of $\mathcal{R}(K_4-P_3,K_5)$.
\end{center}
\vspace{.3in}

\section{Properties of $(K_5-P_3,K_5)$-good Graphs} \label{sec:lemma} 

\medskip
Since $R(K_4,K_5)=25$ \cite{MR2}, any $(K_5-P_3,K_5;25)$-good graph $F$
contains at least one $K_4$. Let $x$ be the vertex of this $K_4$ with
the smallest degree.
We denote by $F^+_x$ the graph induced by the neighborhood of
vertex $x$ and by $F^-_x$ the graph induced by the anti-neighborhood
of vertex $x$ (non-neighbors of $x$, not including $x$).
Note that $F^+_x$ must be a $(K_4-P_3,K_5)$-good graph,
while $F^-_x$ must be a $(K_5-P_3,K_4)$-good graph.

\bigskip
\begin{lemma}
For $n \ge 4$, if $F$ is a $(K_5-P_3,K_5;n)$-good
graph containing a $K_4$, then the sum of the degrees
of the vertices in any $K_4$ contained in $F$ is at most $n+8$.
\end{lemma}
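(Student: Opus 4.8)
The plan is to fix a $K_4$ in $F$ on vertices $\{a,b,c,d\}$ and count, for each vertex $v$ outside this clique, how many of $a,b,c,d$ can be adjacent to $v$. The key observation is that $v$ cannot be adjacent to three or more of $\{a,b,c,d\}$: if $v$ were joined to, say, $a,b,c$, then $\{a,b,c,d,v\}$ would induce a $K_5$ with at most one edge missing (the edge $vd$), i.e.\ a subgraph $K_5-P_3$ or even $K_5-e\supseteq K_5-P_3$, contradicting that $F$ is $(K_5-P_3,K_5)$-good. (Here I use that $K_5$ minus a single edge contains $K_5-P_3$.) Hence every external vertex $v$ contributes at most $2$ to the quantity $\sum_{u\in\{a,b,c,d\}} |N(u)\setminus\{a,b,c,d\}|$.

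Next I would assemble the degree sum. Each of $a,b,c,d$ has $3$ neighbors inside the clique, contributing $4\cdot 3 = 12$ to $\sum_{u} \deg(u)$. The remaining contribution is $\sum_{u\in\{a,b,c,d\}} |N(u)\setminus\{a,b,c,d\}| = \sum_{v\notin\{a,b,c,d\}} |N(v)\cap\{a,b,c,d\}| \le 2(n-4)$ by the previous paragraph, since there are $n-4$ external vertices. Therefore
\[
\sum_{u\in\{a,b,c,d\}} \deg(u) \;\le\; 12 + 2(n-4) \;=\; 2n+4.
\]
This gives the bound $2n+4$, which is stronger than the claimed $n+8$ for $n\ge 4$ only when $2n+4 \le n+8$, i.e.\ $n\le 4$ — so something is off and the simple count is not by itself enough; the intended bound $n+8$ must come from a better restriction on external vertices.

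The real content, then, is to sharpen the external-vertex count: I would show that the number of external vertices adjacent to $\emph{two}$ of $\{a,b,c,d\}$ is itself bounded, because two such vertices interact badly. Concretely, if $v$ and $w$ are both adjacent to the same pair, say $a,b$, then $\{a,b,v,w\}$ together with $a,b$'s clique-partners tends to force either a $K_5-P_3$ in $F$ or a $K_5$ in $\overline{F}$; more usefully, the pairs witnessed by external vertices cannot repeat or overlap too freely, so only a bounded number (I expect at most $4$, one situation per edge of the $K_4$, or a similarly small constant making $12 + (\text{bounded}) + (n-4-\text{bounded}) \le n+8$) can have two clique-neighbors while the rest have at most one. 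The main obstacle will be this case analysis: carefully checking, for external vertices sharing a clique-neighbor or sharing a clique-edge, which configurations produce a forbidden $K_5-P_3$ in $F$ versus a forbidden $K_5$ in $\overline F$, and bounding the number of "degree-2-into-the-clique" vertices by $4$. Once that bound is in hand, the degree sum is $12 + 2\cdot 4 + 1\cdot(n-8) = n+12$ in the worst naive split, so I will actually need the tighter claim that at most one external vertex has two clique-neighbors \emph{beyond} what the baseline of one-per-vertex already allows, giving $12 + (n-4) + 4 = n+12$ — still not $n+8$, so the correct statement is presumably that the external vertices adjacent to $\ge 2$ clique vertices number at most $0$ in the extremal case and the $+8$ absorbs the $12-4=8$ slack from vertices adjacent to \emph{fewer} than one clique vertex being impossible; i.e.\ the clean route is: external vertices adjacent to $0$ clique vertices may exist, those adjacent to $1$ are unrestricted, those adjacent to $2$ number at most $t$, and one shows $t\le 4$ forces nothing better than $n+4$, so the genuine lemma-level input is that \emph{at most four} external vertices touch the clique at all beyond a perfect matching pattern. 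I would reconcile the arithmetic to land exactly on $n+8$ by tracking that each clique vertex $x$ was chosen as \emph{minimum} degree only in the later lemmas, not here, so here the bound $n+8$ should follow from "at most $4$ external vertices see two clique-vertices and the count of clique-external edges is $\le (n-4)+4$," giving $12 + (n-4) + 4 = n + 12$; hence I must instead prove the stronger local claim that the external vertices fall into at most $4$ that contribute $2$ and the \emph{remaining} $n-8$ contribute $0$, not $1$ — which is the step I expect to be the crux and where the $(K_5-P_3,K_5)$-goodness (both colors) is used in full.
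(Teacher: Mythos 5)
There is a genuine gap: you never isolate the one observation the lemma actually rests on, namely that $K_5-P_3$ is $K_5$ with two \emph{adjacent} edges removed. Because of this, an external vertex $v$ adjacent to just \emph{two} vertices of the clique $\{a,b,c,d\}$ already creates the forbidden subgraph: on the five vertices $\{a,b,c,d,v\}$ the only possibly missing edges are the (at most two) non-edges from $v$ to the clique, and these share the vertex $v$, so they are adjacent edges; hence these five vertices contain $K_5-P_3$. Your argument only rules out three clique-neighbors (via $K_5$ minus one edge), which yields the per-vertex bound of $2$ and the useless estimate $2n+4$ that you yourself notice is too weak. The correct claim is that every vertex outside the $K_4$ has at most \emph{one} neighbor in it, i.e.\ the neighborhoods of the four clique vertices are pairwise disjoint outside the clique (this is exactly the paper's one-line argument). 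With that, if $d_1,\dots,d_4$ are the degrees, the sets $N(u_i)\setminus\{a,b,c,d\}$ are disjoint and disjoint from the clique, so
\[
\sum_{i=1}^{4}(d_i-3)+4\le n,\qquad\text{hence}\qquad \sum_{i=1}^{4} d_i\le n+8 .
\]

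The second half of your write-up, which tries to recover the bound by limiting how many external vertices can have two clique-neighbors (guessing ``at most $4$'' and then conceding the arithmetic still gives $n+12$), is not a proof and cannot be completed along those lines: no such partial bound is needed, and the speculation about the complement ($K_5$ in $\overline F$) plays no role in this lemma. The fix is simply to strengthen your first step from ``at most two clique-neighbors'' to ``at most one clique-neighbor,'' using the structure of $K_5-P_3$ as described above; everything else then collapses to the one-line count.
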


\begin{proof}
Let $d_1,d_2,d_3,$ and $d_4$ be the degrees of vertices of
a $K_4$ in $F$. The neighborhoods of each vertex in this $K_4$
must be disjoint, otherwise we have a $K_5-P_3$ subgraph in $F$.
Hence 
\[
\sum_{i=1}^4 (d_i-3)+4 \le n \mbox{, or } \sum_{i=1}^{4} d_i \le n + 8
\]
\end{proof}

\medskip
For a given $n$, we can determine the maximum of the minimum degree
vertex $x$ in the $K_4$ under consideration, and thus the possible
values of $|V(F^+_x)|$ and $|V(F^-_x)|$.
Note that $|V(F^+_x)|+|V(F^-_x)|=n-1$ in each case.
All possibilities for $n \ge 22$ are summarized in Table III,
where column 2 shows the upper bound of Lemma 1,
and column 3 is the upper bound $\lfloor{(n+8)/4}\rfloor$
on the minimum degree vertex $x$ in the $K_4$ under consideration.

\vspace{7mm}
\begin{center}
\begin{tabular}{|r|cc|cc|}
\hline 
$n$ \T \B & $n+8$ & $\lfloor{n/4}\rfloor+2$ & $|V(F^+_x)|$ & $|V(F^-_x)|$ \\
\hline
25 & 33 & 8 & 7 & 17 \\
 & & & 8 & 16 \\
\hline
24 & 32 & 8 & 6 & 17 \\
 & & & 7 & 16 \\
 & & & 8 & 15 \\
\hline
23 & 31 & 7 & 5 & 17 \\
 & & & 6 & 16 \\
 & & & 7 & 15 \\
\hline
22 & 30 & 7 & 4 & 17 \\
 & & & 5 & 16 \\
 & & & 6 & 15 \\
 & & & 7 & 14 \\
\hline
  \end{tabular}
\end{center}

\begin{center}
{\bf Table III.}
Possible parameters of $(K_5-P_3,K_5;n)$-good\\
graphs containing $K_4$, for $n \ge 22$.
\end{center}
\vspace{7mm}

Let $F$ be a $(K_5-P_3,K_5;25)$-good graph containing $K_4$.
By Table I and Table II,
there are only 3 possible graphs for $F^-_x$ and 18 possible graphs
(with a $K_3$) for $F^+_x$. The computation we ran determined
the possible ways these graphs can be connected.
Given a vertex $v$ in $F^+_x$, we define the {\em cone of $v$}
as the set of all the vertices adjacent to $v$ in $F^-_x$.
There are many restrictions we can place on these cones
with the given parameters:

\begin{description}
 \item[(C1)] {\em The cones of any two vertices in any $K_3$ in $F^+_x$ must be disjoint.} Otherwise, the vertices of this $K_3$, $x$ and any vertex in the intersection of two of these cones will create a $K_5-P_3$.
 \item[(C2)] {\em The complement in $V(F^-_x)$ of the union of the cones of any two non-adjacent vertices, $a$ and $b$, must not contain an independent set of order 3.} Otherwise this independent set together with $a$ and $b$ will be an independent set of order 5.
 \item[(C3)] {\em The complement in $V(F^-_x)$ of the cones of any three non-adjacent vertices, $a$, $b$ and $c$, must not contain an independent set of order 2 (that is, it must be complete).} Otherwise we again have an independent set of order 5 with the vertices $a$, $b$, $c$, and any two non-adjacent vertices in the complement.
 \item[(C4)] {\em The intersection of the cones of any two adjacent vertices, $a$ and $b$, must not contain an edge.} Otherwise the vertices $a$, $b$, $x$ and the vertices connected by this edge will create a $K_5-P_3$.
\end{description}

These constraints are not exhaustive enough to fully characterize
$(K_5-P_3,K_5)$-good graphs, but they are sufficiently
restrictive that we will be able to prove that
$R(K_5-P_3,K_5) = 25$ and find the sole $(K_5-P_3,K_5;22)$-good graph.

\section{Computation of $R(K_5-P_3,K_5)$} \label{sec:thm}

\medskip
As shown in Figure \ref{fig:chain}, all possible neighborhoods of $x$ in $V(F)$ on 25 vertices can be constructed from a triangle and two independent vertices.  The highly similar substructure of $F^+_x$ will allow us to eliminate constructions without having to attempt to arrange all $7$ or $8$ cones.  For the three $(K_5-P_3,K_4)$-good graphs on $16$ and $17$ vertices we find arrangements of cones on five vertices, either a triangle and two non-adjacent vertices or a triangle and two adjacent vertices.  The first three vertices form a triangle and its cones are subject to condition \textbf{(C1)}.  If the last two vertices are adjacent their cones must satisfy condition \textbf{(C4)}, otherwise \textbf{(C2)} .  The last two vertices are also independent to each vertex in the triangle, and must satisfy condition \textbf{(C2)} with these vertices as well.  Finally, if the last two vertices are independent they, along with any of the vertices in the triangle, must satisfy condition \textbf{(C3)}. \\

\begin{figure}[h]
\begin{center}
\includegraphics[scale=.5]{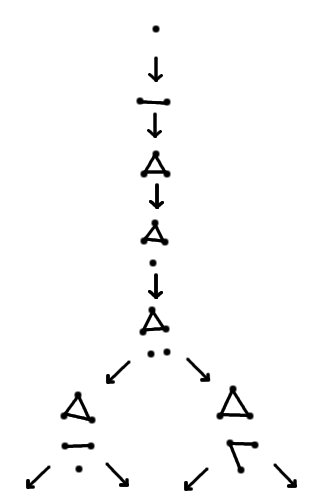}
\caption{Two constructions of $F^+_x$ on up to 6 vertices
form a subgraph of all graphs induced by the apexes of
cones needed in computations. All $F^+_x$ must contain
$K_3$ with two isolated vertices.
}
\label{fig:chain}
\end{center}
\end{figure}

\begin{theorem} \label{thm:main} {\ }
\begin{description}
\item[(1)] $R(K_5-P_3,K_5)=25$.
\item[(2)] There are no $(K_5-P_3,K_5;22)$-good graph containing a $K_4$ on 23 or 24 vertices, and there is a unique $(K_5-P_3,K_5;22)$-good graph which contains a $K_4$.
\end{description}
\end{theorem}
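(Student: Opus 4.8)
The plan is to combine the structural reduction of Section \ref{sec:lemma} with a pruned exhaustive search. For the lower bound in part (1), every $(K_4,K_5;24)$-good graph is also $(K_5-P_3,K_5;24)$-good because $K_4\subseteq K_5-P_3$, so $R(K_5-P_3,K_5)\ge R(K_4,K_5)=25$ \cite{MR2}. For the upper bound it suffices to show that no $(K_5-P_3,K_5;25)$-good graph exists, since deleting a vertex from a good graph on $n+1$ vertices produces a good graph on $n$ vertices (alternatively one may invoke $R(K_5-P_3,K_5)\le 26$ from \cite{Reu}). Now if $F$ were a $(K_5-P_3,K_5;25)$-good graph, then since $R(K_4,K_5)=25$ it must contain a $K_4$; taking the minimum-degree vertex $x$ of such a $K_4$, Lemma 1 and Table III leave only $(|V(F^+_x)|,|V(F^-_x)|)\in\{(7,17),(8,16)\}$, with $F^-_x$ one of the $3$ graphs of Table I and $F^+_x$ one of the $18$ graphs of Table II containing a $K_3$.

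The main economy, shown in Figure \ref{fig:chain}, is that we need not arrange all $7$ or $8$ cones: each candidate $F^+_x$ contains a triangle together with two further vertices in one of the configurations of the figure, and restricting a legal cone assignment of $F$ to these five vertices must already satisfy the relevant instances of \textbf{(C1)}--\textbf{(C4)}. Thus, for each of the three possibilities for $F^-_x$ and each admissible configuration of the two extra vertices relative to the triangle, I would let the program search over assignments of five cones (subsets of $V(F^-_x)$) subject to disjointness of the triangle's cones \textbf{(C1)}, the independent-set conditions \textbf{(C2)}, \textbf{(C3)} forced by the non-edges, and \textbf{(C4)} when the two extra vertices are adjacent. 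This search is small once one uses that every relevant $F^-_x$ has clique number at most $3$ (the last column of Table I shows no $K_4$ for $n\ge 13$), so that \textbf{(C3)} forces a union of three cones to cover all but at most three vertices of $V(F^-_x)$; the program reports no legal assignment in any case, proving part (1). The same argument disposes of $n=24$ and $n=23$: Table III now allows $|V(F^+_x)|\in\{6,7,8\}$ and $\{5,6,7\}$ respectively, but in every case $|V(F^+_x)|\ge 5$, so the triangle-plus-two-vertices substructure is still present and \textbf{(C1)}--\textbf{(C4)} again exclude all cone configurations, giving the first half of part (2).

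The case $n=22$ is heavier and carries the bulk of the computation. Here $|V(F^+_x)|$ may be as small as $4$ (then $F^+_x=K_3\cup K_1$ and the substructure shortcut is unavailable), and, more importantly, \textbf{(C1)}--\textbf{(C4)} are necessary but not sufficient, so one cannot stop at a substructure. Instead, for each admissible pair $(F^+_x,F^-_x)$ from Tables I and II (the size pairs being $(4,17),(5,16),(6,15),(7,14)$), I would enumerate every assignment of the full set of cones -- one subset of $V(F^-_x)$ per vertex of $F^+_x$ -- satisfying \textbf{(C1)}--\textbf{(C4)}, reassemble the graph $F$ on $22$ vertices, verify directly that $F$ is $(K_5-P_3,K_5)$-good (no $K_5-P_3$ in $F$, no $K_5$ in $\overline F$), and reduce the survivors modulo isomorphism with \emph{nauty}; the expected output is the single graph described in the abstract.

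The principal obstacle is keeping the $n=22$ search feasible, since the number of partial cone assignments is large: the computation must be pruned aggressively -- fixing cone cardinalities from the degree bound of Lemma 1 and the clique bound on $F^-_x$, processing the triangle of $F^+_x$ first so that \textbf{(C1)} and \textbf{(C3)} bite early, propagating the independent-set conditions \textbf{(C2)}, \textbf{(C3)} incrementally as cones are built, and canonically ordering $V(F^-_x)$ to quotient out symmetric branches. One must also remember that different pairs $(F^+_x,F^-_x)$ and different choices of the vertex $x$ can reconstruct the same $F$, so the final isomorph rejection is essential both for the correctness of the count and for confirming uniqueness; and the completeness of the list of substructure configurations in Figure \ref{fig:chain} (used for $n\in\{23,24,25\}$) should be checked directly against the explicit lists of $(K_4-P_3,K_5)$-good graphs underlying Table II.
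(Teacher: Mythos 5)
Your overall strategy is the same as the paper's: lower bound from $R(K_4,K_5)=25$, and for the upper bound and for $n=22,23,24$ a search over cone assignments from the vertices of $F^+_x$ into the $(K_5-P_3,K_4)$-good hosts $F^-_x$, pruned by \textbf{(C1)}--\textbf{(C4)} and by the triangle-plus-two-vertices substructure of Figure \ref{fig:chain}, with a full reconstruction and isomorph rejection only at the very end. For $n=25$ your plan coincides with the paper's computation (no valid arrangement of five cones on the three hosts with $16$ or $17$ vertices), and your heavier treatment of $n=22$ (full cone assignments for all size pairs, direct verification, \emph{nauty}) is a legitimate, if costlier, variant of the paper's staged argument.

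The genuine gap is in your handling of $n=23$ and $n=24$. There Table III admits $|V(F^-_x)|=15$, so the host is no longer one of $3$ graphs but one of the $640$ fifteen-vertex graphs of Table I, and you assert that the same five-cone (triangle plus two vertices) search "again excludes all cone configurations." That is a prediction of a computational outcome, not something forced by \textbf{(C1)}--\textbf{(C4)}, and the paper's reported data points the other way: for $|V(F^-_x)|=15$ the authors had to verify that no arrangement of \emph{six} cones exists (Figure \ref{fig:chain} explicitly provides substructures on up to six vertices for exactly this purpose), which strongly suggests that valid five-cone arrangements do survive on some fifteen-vertex hosts -- indeed the unique $22$-vertex graph shows that five- and six-cone arrangements exist on a fourteen-vertex host, so there is no a priori reason five cones should already be infeasible at fifteen. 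As written, your proof of the first half of part (2) therefore rests on an unverified and likely false claim; the fix is exactly the paper's: if the five-cone search leaves survivors on a fifteen-vertex host, extend the substructure to six cones (a six-vertex subgraph of $F^+_x$ containing the triangle, which exists since $|V(F^+_x)|\ge 6$ whenever $|V(F^-_x)|=15$) and rerun the feasibility test, rather than stopping at five. With that amendment, and keeping your final full verification for the $(7,14)$ case at $n=22$, your plan matches the paper's proof.
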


\begin{proof}
The proof is computational.
\begin{description}
\item[(1)] For the three $(K_5-P_3,K_4)$-good graphs on $|V(F^-_x)|=16$ and $17$ vertices there were no valid arrangements of five cones. In fact, there is no valid arrangement of 4 cones for the graph on $17$ vertices. Therefore the result follows.
\item[(2)] When we run the algorithm building cone arrangements
for $|V(F^-_x)|=15$ we find
that there is no valid arrangement of 6 cones.
This eliminates the possibility of creating a $(K_5-P_3,K_5;n)$-good
graph with a $K_4$ for $n=23,24,25$.
For $|V(F^-_x)|=14$ we find that there is one valid arrangement of 7 cones.
This is for $F^+_x = C_3 \cup C_4$, and it gives us exactly
one $(K_5-P_3,K_5;22)$-good graph with a $K_4$,
whose adjacency matrix is given
in Figure \ref{fig:adjmat}.
\end{description}
\end{proof}

\begin{figure}[h]
\begin{verbatim}
         1  0 1 1 1 1 1 1 1 0 0 0 0 0 0 0 0 0 0 0 0 0 0
         2  1 0 1 1 0 0 0 0 1 1 1 1 0 0 0 0 0 0 0 0 0 0
         3  1 1 0 1 0 0 0 0 0 0 0 0 1 1 1 1 1 0 0 0 0 0
         4  1 1 1 0 0 0 0 0 0 0 0 0 0 0 0 0 0 1 1 1 1 1
         5  1 0 0 0 0 1 1 0 1 0 1 0 1 1 0 0 0 0 1 0 1 0
         6  1 0 0 0 1 0 0 1 0 1 0 1 0 1 0 1 0 1 1 0 0 0
         7  1 0 0 0 1 0 0 1 0 1 1 0 1 0 0 0 1 0 0 1 0 1
         8  1 0 0 0 0 1 1 0 1 0 0 1 0 0 1 0 1 1 0 0 0 1
         9  0 1 0 0 1 0 0 1 0 1 0 1 1 0 1 0 0 0 0 1 1 0
        10  0 1 0 0 0 1 1 0 1 0 1 0 0 0 1 1 0 1 0 1 0 0
        11  0 1 0 0 1 0 1 0 0 1 0 1 0 0 0 1 1 0 1 0 0 1
        12  0 1 0 0 0 1 0 1 1 0 1 0 0 1 0 0 1 0 0 0 1 1
        13  0 0 1 0 1 0 1 0 1 0 0 0 0 1 1 0 0 1 1 0 0 1
        14  0 0 1 0 1 1 0 0 0 0 0 1 1 0 0 1 0 1 0 1 0 1
        15  0 0 1 0 0 0 0 1 1 1 0 0 1 0 0 1 0 0 1 0 1 1
        16  0 0 1 0 0 1 0 0 0 1 1 0 0 1 1 0 0 0 0 1 1 1
        17  0 0 1 0 0 0 1 1 0 0 1 1 0 0 0 0 0 1 1 1 1 0
        18  0 0 0 1 0 1 0 1 0 1 0 0 1 1 0 0 1 0 1 1 0 0
        19  0 0 0 1 1 1 0 0 0 0 1 0 1 0 1 0 1 1 0 0 1 0
        20  0 0 0 1 0 0 1 0 1 1 0 0 0 1 0 1 1 1 0 0 1 0
        21  0 0 0 1 1 0 0 0 1 0 0 1 0 0 1 1 1 0 1 1 0 0
        22  0 0 0 1 0 0 1 1 0 0 1 1 1 1 1 1 0 0 0 0 0 0
\end{verbatim}
\caption{The unique $(K_5-P_3,K_5;22)$-good graph with a $K_4$.
Vertices 1 through 4 form $K_4$, $x$ is the first vertex,
vertices 5 through 8 induce $C_4$,
and vertices 9 through 22 are those in $V(F^-_x)$}
\label{fig:adjmat}
\end{figure}

The main computations were performed at least twice
with independent implementations by the first two authors.
They agreed on the number of possible cone arrangements
in all cases and on the final results.

\section{Some Related Ramsey Numbers}

Burr, Erd\H {o}s, Faudree and Schelp \cite{BEFS} proved
a theorem showing that certain small extensions of complete graphs don't increase the Ramsey number.  Let $\widehat{K}_{n,p}$ be the unique graph obtained by connecting a new vertex $v$ to $p$ vertices of a $K_n$.

\medskip
\bigskip
\begin{theorem} \label{thm:erdos} \emph{\cite{BEFS}}
For $m,n \ge 3$ and $m+n \ge 8$,
\[
R(\widehat{K}_{m,p},\widehat{K}_{n,q})=R(K_m,K_n)
\]
\[
\text{with }p=\left\lceil \frac{m}{n-1} \right\rceil \text{ and } q=\left\lceil \frac{n}{m-1} \right\rceil.
\]
\end{theorem}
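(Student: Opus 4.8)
The plan is to establish the two inequalities separately, writing $N=R(K_m,K_n)$ throughout. The lower bound $R(\widehat{K}_{m,p},\widehat{K}_{n,q})\ge N$ is immediate: since $K_m\subseteq\widehat{K}_{m,p}$ and $K_n\subseteq\widehat{K}_{n,q}$, any $(K_m,K_n)$-good graph on $N-1$ vertices contains no $\widehat{K}_{m,p}$ and its complement contains no $\widehat{K}_{n,q}$, so it is already $(\widehat{K}_{m,p},\widehat{K}_{n,q})$-good. All the work is therefore in the upper bound, which I would prove by contradiction, assuming a red/blue coloring of $K_N$ with no red $\widehat{K}_{m,p}$ and no blue $\widehat{K}_{n,q}$.

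First I would record the arithmetic forced by the ceilings, namely $(p-1)(n-1)\le m-1$ and $(q-1)(m-1)\le n-1$, and, crucially, that $m,n\ge 3$ together with $m+n\ge 8$ forces $m-p\ge 2$ and $n-q\ge 2$; this is exactly where the hypothesis $m+n\ge 8$ enters. Since $N=R(K_m,K_n)$, the coloring contains a red $K_m$ or a blue $K_n$, and because the statement is symmetric under the simultaneous swap $(m,p)\leftrightarrow(n,q)$ and red$\leftrightarrow$blue, I may assume a red $K_m$ on a set $A$ with $|A|=m$. The absence of a red $\widehat{K}_{m,p}$ then says that each vertex outside $A$ sends at most $p-1$ red, hence at least $m-p+1$ blue, edges into $A$.

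Granting for the moment a blue $K_{n-1}$ on a set $\{b_1,\dots,b_{n-1}\}$ of vertices outside $A$, I would finish as follows. Each $b_i$ has at most $p-1$ non-blue neighbors in $A$, so the $n-1$ vertices jointly miss at most $(n-1)(p-1)\le m-1<m$ vertices of $A$ in blue; hence some $a^\ast\in A$ is blue to every $b_i$, and $\{b_1,\dots,b_{n-1},a^\ast\}$ is a blue $K_n$. To force a blue $\widehat{K}_{n,q}$, and thereby a contradiction, I would count blue edges from $A\setminus\{a^\ast\}$ to $\{b_1,\dots,b_{n-1}\}$ in two ways. From below, each $b_i$ has at least $m-p$ blue neighbors in $A\setminus\{a^\ast\}$, giving at least $(n-1)(m-p)\ge 2(n-1)$ such edges. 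From above, the absence of a blue $\widehat{K}_{n,q}$ means every $a'\in A\setminus\{a^\ast\}$ has at most $q-1$ blue edges to the blue $K_n$; since $a'a^\ast$ is red these all land in $\{b_1,\dots,b_{n-1}\}$, giving at most $(m-1)(q-1)\le n-1$ edges. As $2(n-1)>n-1$, the two bounds clash, which is the contradiction sought.

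The one step taken on faith, and the main obstacle, is the production of a blue $K_{n-1}$ outside $A$. The natural route is Ramsey's theorem applied to the $N-m$ vertices outside $A$, which I expect to number at least $R(K_m,K_{n-1})$: if they contain no blue $K_{n-1}$ they must contain a second red $K_m$, on a set $A'$, and the two disjoint red cliques interact only weakly, since each vertex of one has at most $p-1$ red edges to the other. I expect to close this by induction on $m+n$, re-running the argument with $A'$ in place of $A$ and showing the recursion either reaches the good case above or is blocked by the weak interaction between $A$ and $A'$. The delicate part is the bookkeeping of vertex counts, so that the relevant threshold $R(K_m,K_{n-1})$ is never undershot, uniformly in the ceiling-defined parameters $p$ and $q$; this is precisely the role played by the inequalities recorded at the outset.
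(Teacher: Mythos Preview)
The paper does not give its own proof of this theorem: it is stated with a citation to Burr, Erd\H{o}s, Faudree and Schelp \cite{BEFS} and used as a black box, so there is no in-paper argument to compare your proposal against.

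On the substance of your sketch: the lower bound and the endgame (once a blue $K_{n-1}$ disjoint from the red $K_m$ on $A$ has been produced) are clean and the counting $(n-1)(m-p)>(m-1)(q-1)$ is exactly right. The genuine gap is the one you flag yourself. Your hoped-for inequality $N-m\ge R(K_m,K_{n-1})$ is not a known consequence of the hypotheses, and the fallback of finding a second disjoint red $K_m$ on $A'$ and ``re-running the argument'' does not obviously terminate: you would need to argue that iterating cannot go on forever inside $N$ vertices, and the interaction bound (each vertex of one red $K_m$ sends at most $p-1$ red edges to the other) does not by itself block an unbounded number of pairwise weakly-interacting red $K_m$'s. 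As written this is a plan rather than a proof; to complete it you would need either a direct production of the blue $K_{n-1}$ or a different structural argument in the spirit of the pigeonhole step the paper uses in its proof of Theorem~\ref{thm:list}(5).
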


\noindent Note that this theorem also implies $R(\widehat{K}_{m,p},K_n)=R(K_m,K_n)$.  For the case $m=4$ and $n=5$, this theorem shows that $R(\widehat{K}_{4,1},\widehat{K}_{5,2}) = 25$, which does not prove $R(\widehat{K}_{4,2},K_5) = R(K_5-P_3,K_5) = 25$.  However, using Theorem \ref{thm:main} and slightly modifying the proof of Theorem \ref{thm:erdos}, we can further show that $R(\widehat{K}_{4,2},\widehat{K}_{5,2}) = 25$.

\medskip
\bigskip
\begin{theorem} \label{thm:list}
All of the following Ramsey numbers are equal to $25$.
\begin{description}
\item[(1)] $R(K_4,\widehat{K}_{5,2})$,
\item[(2)] $R(\widehat{K}_{4,1},K_5)$,
\item[(3)] $R(\widehat{K}_{4,1},\widehat{K}_{5,2})$,
\item[(4)] $R(\widehat{K}_{4,2},K_5)$,
\item[(5)] $R(\widehat{K}_{4,2},\widehat{K}_{5,2})$.
\end{description}
\end{theorem}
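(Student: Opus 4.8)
The plan is to obtain all five values by pinning each of the Ramsey numbers in (1)--(5) between the common lower bound $R(K_4,K_5)=25$ and an upper bound of $25$. The lower bound is automatic: since $K_4\subseteq\widehat{K}_{4,1}\subseteq\widehat{K}_{4,2}$ and $K_5\subseteq\widehat{K}_{5,2}$, monotonicity of $R(\cdot,\cdot)$ under subgraph containment gives $R(K_4,K_5)=25$ as a lower bound for all of them. For the upper bounds, (1)--(4) reduce to results already available. Taking $m=4$, $n=5$ in Theorem \ref{thm:erdos} yields $p=\lceil 4/4\rceil=1$, $q=\lceil 5/3\rceil=2$, and hence $R(\widehat{K}_{4,1},\widehat{K}_{5,2})=R(K_4,K_5)=25$, which is (3); monotonicity then squeezes both $R(K_4,\widehat{K}_{5,2})$ and $R(\widehat{K}_{4,1},K_5)$ between $25$ and $R(\widehat{K}_{4,1},\widehat{K}_{5,2})=25$, giving (1) and (2). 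Item (4) is just Theorem \ref{thm:main}(1) restated, since $\widehat{K}_{4,2}$ is by definition $K_5-P_3$.

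So everything comes down to proving $R(\widehat{K}_{4,2},\widehat{K}_{5,2})\le 25$, i.e. that no $2$-colouring of $K_{25}$ avoids both a red $\widehat{K}_{4,2}=K_5-P_3$ and a blue $\widehat{K}_{5,2}$. Suppose one did. If the blue graph had no $K_5$, the red graph would be a $(K_5-P_3,K_5;25)$-good graph, contradicting Theorem \ref{thm:main}(1); so fix a blue $K_5$ on a set $S$. A vertex outside $S$ with two blue edges into $S$ would complete a blue $\widehat{K}_{5,2}$, so every vertex of $W:=V(K_{25})\setminus S$ has at most one blue, hence at least four red, edges to $S$. I would next show that $W$ spans no blue $K_5$. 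If $T\subseteq W$ did, the same reasoning gives at most one blue edge from each vertex outside $T$ to $T$, so at most five blue and at least twenty red edges run between the blue $K_5$'s $S$ and $T$; looking at the $15$ vertices of $U:=V(K_{25})\setminus(S\cup T)$, if no two of them are red-adjacent then $U$ spans a blue $K_{15}\supseteq\widehat{K}_{5,2}$, while if $u,u'\in U$ are red-adjacent then $u,u'$ have at least three common red neighbours in $S$ and at least three in $T$, and since at most five of the at least nine pairs between these two neighbour sets are blue, some $s_0\in S$, $t_0\in T$ have $s_0t_0$ red, so $\{u,u',s_0,t_0\}$ spans a red $K_4$; a further common red neighbour $s_1\in S$ of $u,u'$ then extends it to a red $K_5-P_3$. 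In every case we reach a contradiction, so $W$ is $K_5$-free in blue.

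To finish, choose $s\in S$ incident to as few blue $S$--$W$ edges as possible. Since the total number of such edges is at most $|W|=20$, this $s$ is red-adjacent to a set $N\subseteq W$ with $|N|\ge 16$. Because $s$ is red-joined to every vertex of $N$, a red copy of $K_4-P_3$ inside $N$ would together with $s$ span a red $K_5-P_3$ (the two edges missing from $K_4$ in $K_4-P_3$ are adjacent, and adjoining the apex $s$ leaves exactly those two edges missing in $K_5$); hence the red graph on $N$ contains no $K_4-P_3$. Since $N\subseteq W$ has no blue $K_5$, the red graph on $N$ is $(K_4-P_3,K_5)$-good on at least $16$ vertices, contradicting $R(K_4-P_3,K_5)=14$. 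This establishes (5), and with it the theorem.

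The routine ingredients are the monotonicity sandwiches for (1), (2), (4) and the elementary red-$K_4$-extension counting used to exclude a blue $K_5$ in $W$. The crux is the reduction in (5): the two claims that $W$ carries no blue $K_5$ and that some vertex of $S$ has at least $16$ red neighbours in $W$ are precisely what let one pass from the global $25$-vertex colouring down to an induced subgraph that the small Ramsey number $R(K_4-P_3,K_5)=14$ can kill. Without such a step the red side of $W$ is only known to be $(K_5-P_3,K_5;20)$-good, and such graphs do exist, so identifying the right induced subgraph on which to invoke the enumerations of Section 2 is where the real work lies.
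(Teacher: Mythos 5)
Your proposal is correct, and parts (1)--(4) are handled exactly as in the paper (Theorem \ref{thm:erdos} plus monotonicity for (1)--(3), and Theorem \ref{thm:main} for (4)); but your proof of (5) takes a genuinely different route. The paper's argument is much shorter: after locating the blue $K_5$ on $S$ (as you do), it invokes Theorem \ref{thm:erdos} together with $R(K_3,K_5)=14$ to get $R(\widehat{K}_{3,1},\widehat{K}_{5,2})=14$, so the $20$ vertices outside $S$ already contain a red $\widehat{K}_{3,1}$ (which is the same graph as $K_4-P_3$); a vertex of $S$ joined in red to all four of its vertices would create a red $\widehat{K}_{4,2}$, so each of the five vertices of $S$ sends at least one blue edge to the four vertices of the red $\widehat{K}_{3,1}$, and pigeonhole produces a vertex of the $\widehat{K}_{3,1}$ with two blue edges into $S$, i.e.\ a blue $\widehat{K}_{5,2}$. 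You instead avoid the ``hatted'' number $R(\widehat{K}_{3,1},\widehat{K}_{5,2})$ and work with the plain value $R(K_4-P_3,K_5)=14$ from Section 2; this forces you first to prove that $W$ contains no blue $K_5$ (your counting argument with the two disjoint blue cliques $S$, $T$ and a red edge in the remaining $15$ vertices is sound) and then to pass to the at least $16$ red neighbours in $W$ of a well-chosen $s\in S$. Both arguments are correct; the paper's is slicker because using $R(\widehat{K}_{3,1},\widehat{K}_{5,2})=14$ lets the blue target absorb the extra vertex, so blue-$K_5$-freeness of $W$ is never needed, whereas your version is somewhat more self-contained, relying on Theorem \ref{thm:erdos} only for (1)--(3) and otherwise on the enumerated value $R(K_4-P_3,K_5)=14$ and elementary counting, at the cost of a longer case analysis.
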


\medskip
\bigskip
\begin{proof}
\begin{description}
\item[(1-3)] Directly from Theorem \ref{thm:erdos}. 
\item[(4)] Proved in Section \ref{sec:thm}. 
\item[(5)] Take a $(\widehat{K}_{4,2},\widehat{K}_{5,2})$-good coloring of a $K_{25}$. Then there must be a blue $K_5$. By Theorem \ref{thm:erdos} and the fact that $R(K_3,K_5)=14$, we have that $R(\widehat{K}_{3,1},\widehat{K}_{5,2})=14$.  Therefore, in the 20 vertices not contained in the blue $K_5$ there must be a red $\widehat{K}_{3,1}$. Each vertex of the blue $K_5$ is adjacent in blue to at least one vertex of the red $\widehat{K}_{3,1}$. So some vertex of the red $\widehat{K}_{3,1}$ is adjacent in blue to at least 2 vertices of the blue $K_5$, creating a blue $\widehat{K}_{5,2}$.
\end{description}
\end{proof}

\bigskip
To conclude, we note that the difficulty of Theorem 3.4,
namely the title case of this paper, is apparently far greater
than that of all other cases covered by Theorem 3. Better understanding
of this difference could lead to an improvement of Theorem 2
covering all manageable small extensions of complete graphs.

\bigskip
\noindent
{\bf Note.}
Recently, Boza \cite{Boza} obtained the equality
$R(K_5-P_3,K_5)=25$ using an approach very different
from ours.

\bigskip
\noindent
{\bf Acknowledgement.} We would like to thank an anonymous
reviewer for numerous suggestions of how to improve
the presentation of this paper.

\end{document}